\renewcommand{\phi}{\varphi}
\newcommand{\N}{\mathbb{N}}
\newcommand{\R}{\mathbb{R}}
\renewcommand{\S}{\mathbb{S}}
\newcommand{\Ic}{\mathcal{I}}
\newcommand{\Oc}{\mathcal{O}}
\newcommand{\argmin}{\operatorname{\arg}\min}
\newtheorem{theorem}{Theorem}
\theoremstyle{remark}
\newaliascnt{conj}{theorem}
\newaliascnt{cor}{theorem}
\newaliascnt{lemma}{theorem}
\newaliascnt{prop}{theorem}
\newaliascnt{definition}{theorem}
\newaliascnt{example}{theorem}
\newaliascnt{notation}{theorem}
\newaliascnt{experiment}{theorem}
\theoremstyle{theorem}
\theoremstyle{definition}
\newcommand{\val}{\mathrm{Val}}
\newcommand{\T}{\mathbb{T}}
\newcommand{\deltaSNR}{\mathrm{\Delta SNR}}
\title{ 
Exact algorithms for $L^1$-TV regularization 
of real-valued or circle-valued signals
 }
\author{Martin Storath\thanks{Biomedical Imaging Group, École Polytechnique Fédérale de Lausanne, Switzerland.}, Andreas Weinmann\thanks{Department of Mathematics, Technische Universität München, and Helmholtz Zentrum M\"unchen, Germany.}, Michael Unser\footnotemark[1]
}
\date{\today}
\begin{document}
	
\newlength\figureheight
\newlength\figurewidth
\setlength\figureheight{0.15\textwidth}

\maketitle

\begin{abstract}
\normalsize
We consider $L^1$-TV regularization
of univariate signals with values on the real line or on the unit circle.
While the real data space
leads to a convex optimization problem, 
the problem is non-convex for circle-valued data.
In this paper, we derive exact 
algorithms for both data spaces.
A key ingredient is the reduction of 
the infinite search spaces 
to a finite set of configurations,
which can be scanned  
by the Viterbi algorithm.
To reduce the computational complexity of the involved
tabulations, we extend 
the technique of distance transforms to non-uniform grids
and to the circular data space.
In total, the proposed algorithms have complexity $\Oc(KN)$
where $N$ is the length of the signal 
and $K$ is the number of different values in the data set.
In particular, 
the complexity is $\Oc(N)$ for quantized data. 
It is the first exact  algorithm
for TV regularization with circle-valued data,
and it is competitive with the state-of-the-art methods
for scalar data, assuming that the latter are quantized.
\end{abstract}

Keywords: Total variation regularization, total cyclic variation, circle-valued data, least absolute deviations,
dynamic programming, distance transform

\section{Introduction}

Total variation (TV) minimization has
become a standard method for  jump or edge preserving
regularization of signals and images.
Whereas the classical $L^2$-TV model
(i.e., TV with quadratic data fidelity term \cite{rudin1992nonlinear})
is optimally matched to the Gaussian noise model,
$L^1$ data terms are more robust
to noise with more heavy tailed distributions 
such as Laplacian noise, and to the presence of outliers; 
see, e.g., \cite{nikolova2002minimizers}.
Further advantages 
are the better preservation of the contrast
and the invariance to global contrast changes
\cite{chan2005aspects}.
Since $L^1$-TV minimization is
a convex problem for
real- and vector-valued data,
it is accessible by convex optimization techniques.  
In fact, there are several 
algorithms for $L^1$-TV minimization with scalar and vectorial data.
The minimization methods are typically of iterative nature:
for example, interior point methods \cite{fu2006efficient},
iterative thresholding \cite{beck2009fast}, alternating methods of multipliers \cite{wang2008new, goldstein2009split},
semismooth Newton methods \cite{clason2009duality},
primal-dual strategies \cite{dong2009efficient, chambolle2011first}, and proximal point methods \cite{micchelli2013proximity}.
There are also other  algorithms based on recursive median filtering \cite{alliney1997property}
or graph cuts \cite{darbon2006image}.

For univariate real-valued signals, 
 efficient exact algorithms are available
for $L^2$-TV; for instance
the taut string algorithm which has a linear complexity \cite{mammen1997locally,davies2001local}.
A recent alternative is the algorithm of Condat  \cite{condat2012direct}
which shows a particularly good performance in practice.
The $L^1$-TV problem is computationally more intricate.
For data $y \in \R^N$ and
a non-negative weight vector $w \in \R^N,$ it is given by
\begin{equation}\label{eq:l1tv} 
	\argmin_{x \in \R^N} \ \alpha \sum_{n=1}^{N-1} |x_{n} - x_{n+1}| + \sum_{n=1}^N w_n | x_n - y_n|,
\end{equation}
where $\alpha>0$ is a model parameter regulating the tradeoff between data fidelity
and TV prior.
In a Bayesian framework,
it corresponds to the maximum a posteriori estimator
of a summation process with Laplace distributed increments
under a Laplacian noise model; see, e.g., \cite{unser2014introduction}.
Kovac and Dümbgen \cite{dumbgen2009extensions} 
have derived an exact solver of complexity $\Oc(N \log N)$
for \eqref{eq:l1tv}.
Recently, Kolmogorov et al.~\cite{kolmogorov2015total} have proposed a solver of complexity
$\Oc(N \log\log N).$ 

Recently, total variation 
regularization on non-vectorial data spaces such as, e.g., Riemannian manifolds
has received a lot of interest \cite{chan2001total, CS13,WDS2013, lellmann2013total, grohs2014total, baust2015total}.
The non-vectorial setting is a major challenge 
because the total variation problem 
is, in general, not  anymore convex.
One of the simplest examples, 
where the $L^1$-TV functional is nonconvex, is circle-valued data. 
Such data appears, for 
example, as phase signals (which are defined modulo $2\pi$) 
and as time series of angles.
Particular examples for the latter are  
the data on the orientation of the bacterial flagellar motor \cite{sowa2005direct}
and the data on wind directions  \cite{davis2002statistics}.
The $L^1$-TV functional for circle-valued data $y \in \T^N$ is given by
\begin{equation}\label{eq:tvCirc} 
	\argmin_{x \in \T^N} \ \alpha \sum_{n=1}^{N-1} d_\T(x_{n}, x_{n+1}) + \sum_{n=1}^N w_n\, d_\T(x_n, y_n),
\end{equation}
where $d_\T(u,v)$ denotes the arc length distance of $u, v \in \T = \S^1.$
Theoretical results on 
total cyclic variation can be found  
in the papers of Giaquinta et al.~\cite{giaquinta93variational}
and of Cremers and Strekalovskiy \cite{CS13}.
The authors of the latter one have shown that the problem is computationally 
at least as complex as the Potts problem; 
this means, in particular, that it is NP-hard in dimensions greater than one.
Current minimization strategies for \eqref{eq:tvCirc}
are based on convex relaxations \cite{CS13},
proximal point splittings \cite{WDS2013},
or iteratively reweighted least squares \cite{grohs2014total}.
However, due to the non-convexity of \eqref{eq:tvCirc}, 
these iterative approaches 
do not guarantee convergence to a global minimizer.
Furthermore, they are computationally  demanding. 
To our knowledge, 
no exact algorithm for \eqref{eq:tvCirc} has been proposed yet.

In this paper, we propose 
exact non-iterative 
algorithms for $L^1$-TV minimization
on scalar signals \eqref{eq:l1tv}  and on circle-valued signals \eqref{eq:tvCirc}.
A key ingredient is
the reduction of the infinite search space,
$\R^N$ or $\T^N,$ to a finite search space $V^N.$
This reduction 
allows us to use the Viterbi algorithm \cite{viterbi1967error,forney1973viterbi}
for the minimization of discretized energies 
as presented in  \cite{felzenszwalb2011dynamic}.
A time-critical step in the Viterbi algorithm 
is the computation of a distance transform
w.r.t. the non-uniform grid induced by $V.$
For the scalar case, we generalize the efficient 
two-pass algorithm of Felzenszwalb and Huttenlocher 
\cite{felzenszwalb2004distance,felzenszwalb2006efficient}
from uniform grids to our non-uniform setup.
We further propose a new method for efficiently computing the distance
transforms in the circle-valued case.
In total, our solvers have complexity $\Oc(KN)$
where $K$ denotes the number of 
different values in the data.
In particular, if the 
data is quantized to finitely many levels, 
the algorithmic complexity is $\Oc(N).$
It  is the first exact algorithm 
for TV regularization of circle-valued signals,
and it is competitive with the state-of-the-art methods for 
real-valued signals, assuming that the latter are quantized.

\begin{figure}
	\def\figfolder{experiments/randomSignal/}
		\def\figurewidth{0.47\columnwidth}
	\centering	
\centering
\includegraphics[width=\figurewidth]{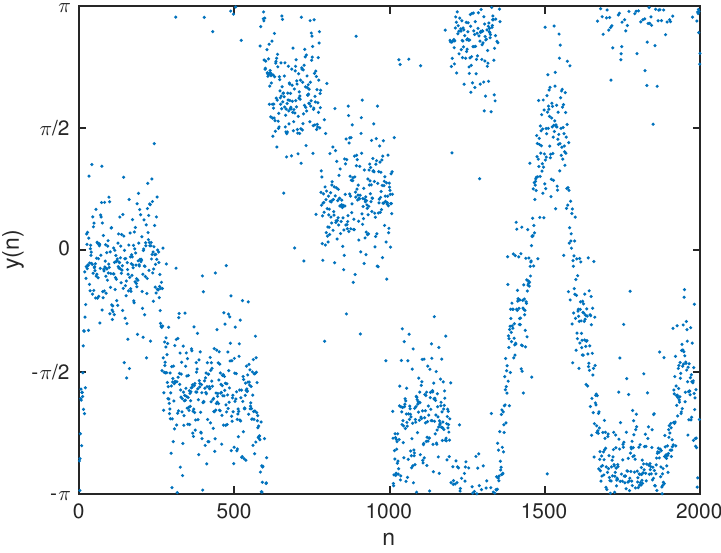} 
\hfill
\includegraphics[width=\figurewidth]{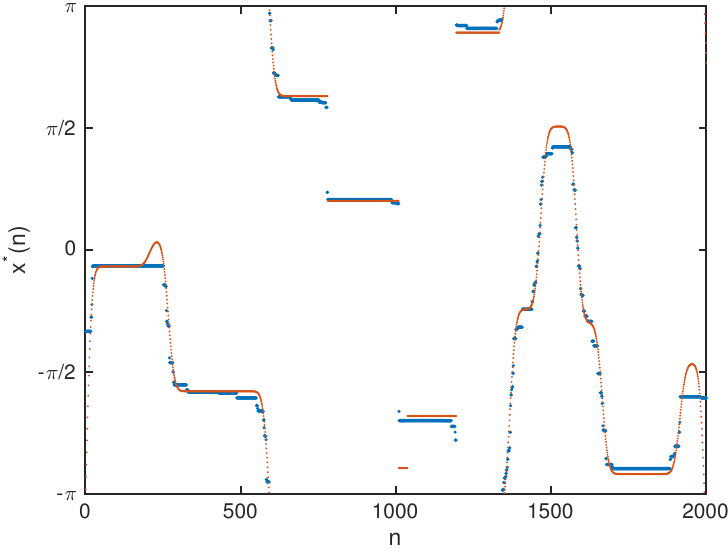} 
	\caption{\emph{Left:} Synthetic circle-valued signal corrupted by noise.   
	\emph{Right:} Global minimizer $x^*$ of the TV  functional with 
	$\alpha = \protect\input{\figfolder alpha.txt}.$ 
	(Ground truth displayed as small red points.)
	The noise is almost completely removed and the jumps are preserved.
	The phase jumps of $2\pi$ are taken into account properly.
	}
\label{fig:synthCirc}
\end{figure}

\subsection{Organization of the paper}
In Section~\ref{sec:reduction},
we show that the search space
can be reduced to a finite set.
In Section \ref{sec:dynProg}, we 
present our minimization strategy
for the reduced problem.
In Section~\ref{sec:numerics}, 
we present numerical experiments based
on synthetic and real data.
Eventually, we discuss the relations
to other approaches.

\section{Reduction of the search space}\label{sec:reduction}

A crucial step in our derivation
is the reduction of the search space to a finite set.
In the following, we denote the 
$L^1$-TV functional for data $y \in \R^N$ or $y \in \T^N$  by
\[
	T_{\alpha; y}(x) =  \alpha \sum_{n=1}^{N-1} d(x_{n}, x_{n+1}) + \sum_{n=1}^N w_n\, d(x_n, y_n).
\]
Here $d$ denotes the distance
that corresponds to the data space, i.e.,
the Euclidean distance for real-valued data and the arc length distance for circle-valued data.
We  further use the notation $\val(y)$ to denote the set of values of the $N$-tuple $y,$ i.e.,
\[
	\val(y) = \{ v : \text{ there is }n \text{ with }  1\leq n \leq N  \text{ s.t. } y_n = v \},
\]
or, using set notation, $\val(y) = \{y_1, \ldots, y_n\}.$
Also recall that a (weighted) median of $y$
is a minimizer of the functional 
\[
\mu \mapsto \sum_{n=1}^N w_n d(\mu, y_n).
\]
We will see that there are always minimizers of the 
$L^1$-TV problem whose values are all contained
in the values $\val(y)$ of the data~$y$ (united with the antipodal points $\val(\tilde y)$ in the circle-valued case).

\subsection{Real-valued data}

Let us first consider the real-valued case.
The following 
assertion on the minimizers of the $L^1$-TV functional
has been proven by Alliney \cite{alliney1997property}.
His proof is based on results of convex analysis.
Here, we develop an alternative technique 
which does not exploit the convexity of the TV functional.
The crucial point is that this technique will allow us to
treat the more involved non-convex circular case later on.

\begin{theorem}\label{thm:setOfMinimizersScal}
Let $\alpha >0, $ $y \in \R^N,$ and $V = \val(y).$
Then 
\[
	\min_{x\in \R^N} T_{\alpha; y}(x) = \min_{x\in V^N} T_{\alpha; y}(x).
\]
\end{theorem}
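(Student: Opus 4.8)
The plan is to start from an arbitrary global minimizer $x \in \R^N$ (one exists because $T_{\alpha; y}$ is continuous, nonnegative, and piecewise linear) and to manufacture from it another global minimizer all of whose components lie in $V = \val(y)$. Since $V^N \subseteq \R^N$ we trivially have $\min_{x \in V^N} T_{\alpha; y}(x) \ge \min_{x \in \R^N} T_{\alpha; y}(x)$, so it suffices to exhibit a minimizer whose value vector belongs to $V^N$. The central device is a one-parameter perturbation of an entire \emph{constant block} of $x$. I call a maximal set of consecutive indices $I = \pg{i, \dots, j}$ on which $x$ is constant, with value $c$, a plateau, and write $x_{i-1}, x_{j+1}$ for the neighbouring values (when they exist). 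I would study
\[
	f_I(t) = T_{\alpha; y}(x + t\, \indfunc_I),
\]
where $\indfunc_I$ is the indicator of $I$: shifting the whole plateau by $t$ leaves every internal difference $\abs{x_n - x_{n+1}}$ with $n \in I$ equal to zero, so only the two boundary TV terms and the data terms on $I$ are affected.

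The first key step is to observe that $f_I$ is a convex, piecewise-linear function of the single real variable $t$, being a sum of the shifted absolute-value terms $\alpha\abs{x_{i-1} - (c+t)}$, $\alpha\abs{(c+t) - x_{j+1}}$ and $\sum_{n \in I} w_n \abs{(c+t) - y_n}$ (boundary terms dropped when $I$ touches an end of the signal). Its only kinks occur where $c + t$ equals one of the finitely many values in $\pg{x_{i-1}, x_{j+1}} \cup \pg{y_n : n \in I}$. Because $x$ is a global minimizer of $T_{\alpha; y}$, the point $t = 0$ minimizes $f_I$; and a coercive convex piecewise-linear function attains its minimum at a kink (either a unique kink, or the endpoints of a flat minimizing segment). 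Hence I can choose a minimizing $t^\ast$ with $c + t^\ast \in \pg{x_{i-1}, x_{j+1}} \cup \pg{y_n : n \in I}$, and the resulting vector $x + t^\ast \indfunc_I$ is still a global minimizer, now with block value either equal to a data value (hence in $V$) or equal to a neighbouring plateau value. Coercivity of $f_I$ holds because the boundary TV terms already force $f_I(t) \to \infty$ unless $I$ is the whole signal, in which case $f_I$ is the weighted-median objective whose minimizer is automatically a data value; the only exception is the trivial all-zero-weight case, handled separately.

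The remaining, and main, difficulty is to turn this single move into a terminating procedure producing a vector in $V^N$. I would argue by contradiction using a lexicographic minimality: among all global minimizers choose one, $x$, minimizing first the number of components not lying in $V$, and then the number of distinct plateaus. If some component lies outside $V$, pick such a plateau and apply the block move. If $c + t^\ast$ is a data value, the whole block enters $V$ and the primary count strictly drops; if $c + t^\ast$ equals a neighbouring value, the block merges with that neighbour, strictly decreasing the number of plateaus while leaving the primary count unchanged. Either outcome contradicts the chosen minimality, so every component of $x$ must already lie in $V$, which yields the claim. The delicate point to get right is precisely this termination bookkeeping — verifying that a merge never increases the count of out-of-$V$ components and does strictly reduce the plateau number, so that the lexicographic potential genuinely decreases. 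This, together with the convexity-free nature of the block argument, is exactly what is expected to let the proof transfer to the circle-valued setting.
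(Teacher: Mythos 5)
Your proof is correct, and it shares the paper's skeleton: perturb a maximal constant block, show that its value can be moved to a data value or to a neighbouring block's value without increasing the energy, then iterate with a counting argument. The genuine difference is the engine behind the key step. The paper avoids convexity on purpose: it computes the competing slopes explicitly (the TV penalty changes at rate $2\alpha$, or $\alpha$ for a block touching the signal boundary, and the data term at rate $W^- - W^+$, where $W^-$, $W^+$ collect the weights of the data on $I$ below and above the block value), and runs a three-way case analysis on the neighbours --- both below, both above, or straddling, the last being the weighted-median case --- to prescribe exactly where the block goes. You compress all of that into one fact: the one-variable restriction $f_I$ is coercive, convex and piecewise linear, hence has a minimizing kink, and the kinks lie in $\pg{x_{i-1}, x_{j+1}} \cup \pg{y_n : n \in I}$. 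This is shorter and cleaner on the real line; what the paper's explicit route buys is precisely what it announces, namely an argument that carries over almost verbatim to the non-convex circle-valued problem (Theorem~\ref{thm:setOfMinimizersCirc}). On that point your closing remark overstates things: your argument as written is not ``convexity-free'' --- you invoke convexity of $f_I$ in the kink lemma. The repair is immediate (a coercive piecewise-linear function, convex or not, attains its minimum at a kink, which is all you actually use), but it has to be made before your proof transfers to $\T$-valued data, where $f_I$ is genuinely non-convex and acquires extra kinks at antipodal points. Two minor economies you could adopt from the paper: its single counter $|\Ic|$, the number of maximal blocks whose value lies outside $V$, strictly decreases at every move --- whether the block lands in $V$ or merges with a neighbour, in $V$ or not --- so your lexicographic potential and the argument by contradiction are unnecessary; and running the construction on an arbitrary $x$ (producing $x' \in V^N$ with $T_{\alpha;y}(x') \le T_{\alpha;y}(x)$) proves the identity directly, without first having to secure existence of a minimizer over $\R^N$.
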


\begin{proof}
The method of proof is as follows: 
we consider an arbitrary $x \in \R^N$
and construct $x' \in V^N$ such 
that $T_{\alpha;y}(x') \leq T_{\alpha;y}(x).$
If we apply this procedure to a minimizer $x^\ast,$
we obtain a minimizer with values in $V^N$  
which is the assertion of the theorem. 

So let $x \in \R^N$ be arbitrary
and let us construct $x' \in V^N$
with smaller or equal $T_{\alpha;y}$ value
by the following procedure.
Let $\Ic$ be the set of maximal intervals of $\{1,\ldots,N\}$
where $x$ is constant on and where $x$ does not attain its value in $V.$
It means that each element $I$ of $\Ic$ is an \enquote{interval} of the form $I = \{l, l+1, \ldots, r\}$ 
such that $a := x_l = \ldots = x_r  \notin V$ and such that $x_{l-1} \neq x_l$ (unless $l=1$)
and $x_{r} \neq x_{r+1}$ (unless $r=N$).
If $\Ic$ is empty then $x \in  V^N$ and we are done.
Otherwise,   we decrease the number of such intervals $|\Ic|$
by the following rule:
Choose an interval 
$I = \{l,\ldots, r\} \in \Ic.$
We construct $\bar{x}$ which equals $x$ 
outside $I$ and choose its constant value $a'$ such that the 
corresponding number of intervals with values which are not in $V$ is strictly smaller than $|\Ic|.$ 
We distinguish three cases.
 
First assume that $I$ is not a boundary interval (i.e. $l\neq 1$ and $r\neq N$) and that the values 
of the two neighboring intervals, $x_{l-1}$ and $x_{r+1},$
are both 
smaller than the value on $I$ which is equal to $a.$ (We call an interval a neighbor of $I$ if it contains the index $l-1$ or $r+1.$)
We denote the nearest smaller and the nearest greater neighbors 
of $a$ in $V$ by $b^-$ and $b^+,$ respectively.
Let $b' = \max(x_{l-1}, x_{r+1}, b^-).$
By replacing $a$ by  some $a' \in [b', b^+]$
we change the total variation penalty  by $2\alpha(a'-a)$
and the data penalty by $(W^- - W^+) (a'-a).$
Here $W^+ = \sum_{\{i \in I: y_i > a\}} w_i$ and $W^-= \sum_{\{i \in I: y_i \leq a\}} w_i$ 
are the weights 
of elements in the interval $I$ that are greater 
or smaller than $a,$ respectively.
If $W^- + 2\alpha < W^+,$ 
we let $a'$ equal its greater neighbor $b^+.$
Otherwise, we let $a' = b'$ where, by the definition above, 
$b' = \max (x_{l-1}, x_{r+1}, b^-).$
If $a' = b^-,$ then the value of $\bar x$ on $I$ belongs to $V.$
If $a' \in \{ x_{l-1}, x_{r+1} \}$ the interval merges with one of its neighbors.
In both cases, the number of intervals with \enquote{undesired} values,  $|\Ic|,$ decreases by one.
By symmetry, the same argumentation is 
valid for the case that the 
values of the neighboring intervals 
$x_{l-1}$ and $x_{r+1}$ are both 
greater than~$a.$

As second case we consider the situation where $I$ is not a boundary interval, 
and where $x_{l-1}$ is smaller and $x_{r+1}$ is greater than 
$a.$ (Again, the case $x_{l-1}>a>x_{r+1}$ is dealt with by symmetry.)
Since replacing $a$ by any value in $[x_{l-1}, x_{r+1}]$
does not change the total variation penalty,
we only need to look at 
the approximation error. 
This amounts to setting $a'$
equal to a (weighted) median of $y_{l}, \ldots, y_r.$
Note that there exists a (weighted) median that it is contained in $\{y_{l}, \ldots, y_r\} \subset V.$ We use such a median in $V$ to define $a'.$
Hence, also in this case, $|\Ic|$ decreases by one.

Finally, we consider the third case where
the interval is located at the boundary.
If either $1 \in I$ or $N \in I$
then we proceed analogously to the first case.
The relevant difference is that
we let $a'$ equal its greater nearest neighbor $b^+$
if $W^- + \alpha < W^+$ (instead of $W^- + 2\alpha < W^+$). 
If the interval touches both boundaries, i.e., if $I = \{1,\ldots,N\},$
we proceed as in the second case,
which is setting $a'$ to be a (weighted) median $y$
which is contained in $V.$

We repeat the above procedure until 
$|\Ic| = 0$ 
which implies that the final result $x'$ is contained in $V^N.$
By construction, the functional value $T_{\alpha;y}(x')$ is not exceeding the 
functional value of $x,$ since all intermediately constructed $\bar x$ do so.
This completes the proof.
\end{proof}

Note that the assertion of Theorem~\ref{thm:setOfMinimizersScal}
is not true for quadratic data fidelities.
As the following simple example shows,
it is not uncommon
that $\val(\hat x) \cap \val(y) = \emptyset$
for all $L^2$-TV minimizers~$\hat x.$
We consider toy data $y = (0,1) \in \R^2$
and the corresponding $L^2$-TV functional given by
$x\mapsto \alpha |x_1 - x_2| + x_1^2 + (x_2 - 1)^2.$
It is easy to check that the unique minimizer 
of this $L^2$-TV problem is given by 
$\hat x = (\alpha/2, 1 - \alpha/2),$ if $\alpha < 1,$
and by  $\hat x = (1/2, 1/2),$ otherwise.
We note that this is an example
where $\val(\hat x) \cap \val(y) = \emptyset$
even for all $\alpha > 0.$
This shows that one cannot even expect an analogous result when one chooses a suitable parameter.
For a more detailed 
discussion of this aspect we refer to the paper of Nikolova \cite{nikolova2002minimizers}.
It is interesting to note that
an assertion analogous to that of Theorem~\ref{thm:setOfMinimizersScal} 
can be shown for the Potts model,
although the model and the corresponding proof are quite different;
see \cite{weinmann2015l1potts, storath2015jump}.

\subsection{Circle-valued data}

Now we use the  techniques developed for the real-valued case in our proof
of Theorem~\ref{thm:setOfMinimizersScal} in the more involved situation
of circle-valued data to prove the following theorem allowing for the 
reduction of the search space for minimizers of the $L^1$-TV functional 
for $\mathbb S^1$-valued data as well.

\begin{theorem}\label{thm:setOfMinimizersCirc}
Let $\alpha >0, $ $y \in \T^N,$ and $V = \val(y) \cup \val(\tilde y),$
where $\tilde y$ denotes the tuple of antipodal points of $y.$ 
Then 
\[
	\min_{x\in \T^N} T_{\alpha; y}(x) = \min_{x\in V^N} T_{\alpha; y}(x).
\]
\end{theorem}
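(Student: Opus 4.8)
The plan is to mimic the strategy of the proof of \autoref{thm:setOfMinimizersScal}: starting from an arbitrary $x \in \T^N$, I would construct $x' \in V^N$ with $T_{\alpha;y}(x') \leq T_{\alpha;y}(x)$, and then apply this to a minimizer. As before, I would let $\Ic$ be the set of maximal intervals on which $x$ is constant and takes a value outside $V = \val(y) \cup \val(\tilde y)$, and I would reduce $|\Ic|$ by one at a time. The essential new feature is that on the circle there is no global order, so the notions of ``nearest smaller/greater neighbor'' and ``larger/smaller than $a$'' must be localized. My plan is to fix an interval $I = \{l,\ldots,r\} \in \Ic$ with constant value $a \notin V$, and to move $a$ along the circle while controlling how each of the penalty terms varies.

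The key device would be to parametrize the candidate values near $a$ by arc length and to split the analysis according to the local geometry, just as the scalar proof splits into three cases. Concretely, for the data penalty I would use that each term $w_n d_\T(\argdot, y_n)$ is, as a function of the moving value, piecewise affine in the arc-length parameter with slopes $\pm w_n$, changing slope precisely at $y_n$ and at its antipode $\tilde y_n$ --- this is exactly why the antipodal points $\val(\tilde y)$ must be adjoined to $V$. For the total variation penalty $\alpha(d_\T(x_{l-1},\argdot) + d_\T(\argdot, x_{r+1}))$ the same piecewise-affine behavior holds, with kinks at $x_{l-1}, x_{r+1}$ and their antipodes. Thus along any arc avoiding these finitely many breakpoints, $T_{\alpha;y}$ restricted to varying the value on $I$ is affine, so it does not increase if I move $a$ in the direction of non-positive slope until I hit the nearest breakpoint. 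That breakpoint is either a point of $V$ (giving a value in $V$), an antipode in $V$, or one of the neighboring values $x_{l-1}, x_{r+1}$ (causing $I$ to merge with a neighbor); in every case $|\Ic|$ drops by one. Boundary intervals are handled as in the scalar proof, with the coefficient of $\alpha$ in the comparison halved because only one neighbor contributes to the TV term, and the all-of-$\{1,\ldots,N\}$ case reduces to choosing a circle-valued weighted median of $y$, which can be taken in $\{y_l,\ldots,y_r\} \subset V$.

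The main obstacle I anticipate is that on the circle the total variation term $d_\T(x_{l-1},\argdot)+d_\T(\argdot,x_{r+1})$ is \emph{not} globally monotone as the value traverses the whole circle, and the geodesic realizing $d_\T(x_n, y_n)$ can switch which way around the circle it goes. So I cannot simply push $a$ monotonically as in the linearly ordered scalar case; I must argue locally on a single arc and check that the arc on which my affine computation is valid actually reaches a breakpoint of the desired type before the slope sign would change. This requires a careful bookkeeping of the breakpoints coming from $y_n$, $\tilde y_n$, $x_{l-1}$, $x_{r+1}$ and their antipodes, and verifying that between two consecutive such breakpoints the distance functions are genuinely affine with well-defined constant slopes. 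Once this local piecewise-linearity is established, the descent argument is identical in spirit to the scalar case: along the chosen arc the functional is affine, hence a minimum over the arc is attained at an endpoint, and that endpoint lies in $V$ or forces a merge.

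Finally, after reducing $|\Ic|$ to zero in finitely many steps, the resulting $x'$ has all its values in $V$, and by construction each step did not increase $T_{\alpha;y}$, so $T_{\alpha;y}(x') \leq T_{\alpha;y}(x)$. Applying this to a global minimizer over $\T^N$ yields a minimizer in $V^N$ of equal functional value, which gives the claimed equality of the two minima.
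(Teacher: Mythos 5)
Your overall plan coincides with the paper's: induct on the set $\Ic$ of maximal constant intervals whose value lies outside $V$, and move the constant value $a$ until it lands in $V$ or the interval merges with a neighbor; your slope bookkeeping is exactly what the paper encodes through the hemisphere weights $W_1,W_2$ and the nearest $V$-neighbors $b_1,b_2$. The gap is at the decisive step. You assert that, moving $a$ in a direction of non-positive slope, the nearest breakpoint is ``either a point of $V$, an antipode in $V$, or one of the neighboring values $x_{l-1},x_{r+1}$,'' so that $|\Ic|$ always drops. This contradicts your own list of kinks: the antipodes $\tilde x_{l-1},\tilde x_{r+1}$ of the neighboring values are also breakpoints of the TV term, and since $x_{l-1},x_{r+1}$ need not lie in $V$ (adjacent intervals may themselves belong to $\Ic$), these antipodes are in general neither points of $V$ nor values whose adoption merges $I$ with a neighbor. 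Concretely, parametrize $\T$ by $(-\pi,\pi]$ and take $I=\{n\}$ with $y_n=1$, $w_n=1$, $\alpha=1/10$, $x_{l-1}=x_{r+1}=-3\notin V$, and current value $a=0$: the descent direction is counterclockwise with slope $-1+2\alpha=-0.8$, and the nearest breakpoint in that direction is $\tilde x_{l-1}=\pi-3\approx 0.14$. Stopping there leaves a value outside $V$ and produces no merge, so the induction stalls. This is precisely the ``main obstacle'' you flag, but flagging it and invoking ``careful bookkeeping'' does not resolve it; the check you propose (that the arc reaches a breakpoint ``of the desired type'' first) is simply false for the nearest breakpoint.

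The missing idea is to classify the kinks by convexity. At $\tilde x_{l-1}$, $\tilde x_{r+1}$ (and at $\tilde y_n$) the restricted functional has a \emph{concave} kink: the one-sided slope in the direction of motion strictly decreases there, hence stays non-positive, and the descent may continue straight through such points without increasing the functional. The slope can strictly increase only at a point where a term $w_n d_\T(\argdot,y_n)$, $n\in I$, or a TV term $\alpha\, d_\T(\argdot,x_{l\pm 1})$ contributes a convex kink, i.e.\ only at a point of $V$ (note $y_n,\tilde y_n\in V$) or at $x_{l-1},x_{r+1}$. Therefore the descent terminates at a point of $V\cup\{x_{l-1},x_{r+1}\}$, unless the restricted functional is constant along the entire circle, in which case any point of $V$ may be chosen. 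Equivalently: a minimum of a continuous piecewise-affine function on the circle is attained at a kink where the slope increases, and all such kinks here lie in $V\cup\{x_{l-1},x_{r+1}\}$. In my example the descent passes $\pi-3$ (slope drops from $-0.8$ to $-1.2$) and stops at $y_n=1\in V$, as it should. With this lemma your argument closes, and it even handles interior and boundary intervals uniformly, dispensing with the paper's case distinction between neighbors on one hemisphere and on different hemispheres; without it, the proposal as written fails at the step quoted above.
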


\begin{proof}
As in the proof of Theorem \ref{thm:setOfMinimizersScal},
we consider an arbitrary $x \in \T^N$
and construct $x' \in V^N$ such 
that $T_{\alpha;y}(x') \leq T_{\alpha;y}(x).$
Note that, in contrast to the proof of Theorem \ref{thm:setOfMinimizersScal},  
$V = \val(y) \cup \val(\tilde y)$ here.
Similarly, we
let $\Ic$ be the set of the maximal intervals $I$ of $\{1,\ldots,N\}$
where $x$ is constant on and where the attained value $a$ of $x$ on $I$ 
is not contained in $V.$
We decrease the number of such intervals $|\Ic|$
by the procedure explained below. 

Before being able to give the explanation we need some notions 
related to $\mathbb S^1$ data. Let us consider a point $a$ on the 
sphere and its antipodal point $\tilde a.$ Then there are two hemisphere/half-circles connecting $a$ and $\tilde a.$ We use the convention that $\tilde a$ is contained 
in both hemispheres whereas $a$ is contained in none of them.  
These two hemispheres
can be distinguished into the hemisphere $H_1=H_1(a)$ determined by walking from $a$
in clockwise direction and the hemisphere $H_2=H_2(a)$ obtained from walking in counter-clockwise direction.

Equipped with these preparations, we explain
the procedure to reduce the number of intervals $|\Ic|.$
We pick an arbitrary interval $I = \{l,\ldots, r\} \in \Ic$
and let $a = x_l = \ldots = x_r$
be the value of $x$ on $I.$
We let $b_1$ and $b_2$ be the nearest neighbors
of $a$ in $H_1 \cap V$ and in $H_2 \cap V,$
which are the values  of the data (or their antipodal points) on the clockwise and counter-clockwise hemisphere, respectively.
We note that $b_1,b_2$ exist and both are not equal to the antipodal point $\tilde a$ of $a.$
This is because, together with a point $p$, its antipodal point $\tilde p$
is also contained in $V$ which implies that either $p$ or $\tilde p$ is a member of $H_1$
and either $\tilde p$ or $p$ is a member of $H_2.$ Since $a$ is not contained in the set $V$
of values of $y$ and its antipodal points, the distance to either $p$ or $\tilde  p$ is strictly smaller that $\pi.$  
We construct $\bar{x}$ which equals $x$ 
outside $I$ and with constant value $a'$ on $I$ such that $|\Ic|$ decreases. 
We have to differentiate three cases.

First we assume that $I$ is no boundary interval
and that the left and the right neighboring candidate item $x_{l-1}$ and $x_{r+1}$ are both located on the clockwise hemisphere $H_1$ and none of them agrees with $\tilde a$.
Let $W_1 = \sum_{i: y_i \in H_1} w_i$ be the weight of $y$ on $H_1$
and let $W_2 = \sum_{i: y_i \in H_2} w_i$ be the weight of $y$ on $H_2.$
(Note that $\tilde a$ which is the only point in both $H_1$ and $H_2$ is not a member of $y$.)
If $W_1 > W_2 + 2\alpha,$
which means that the clockwise hemisphere $H_1$
is \enquote{heavier} than the counterclockwise hemisphere $H_2$ plus the variation penalty,
we set $a'$ to be the nearest neighbor
of $a$ in $\{x_l, x_r, b_1\}.$ 
This may be visualized as shifting the value on $I$ in clockwise direction 
until we hit the first value in $\{x_l, x_r, b_1\}.$ 
Since $W_1 > W_2 + 2\alpha,$ we have that 
$T_{\alpha; y}(\bar x) \leq T_{\alpha; y}(x).$ 
Otherwise, we set $a' = b_2$
which means that we shift to the other direction.
Since then $W_1 \leq W_2 + 2\alpha,$ we get
$T_{\alpha; y}(\bar x) \leq T_{\alpha; y}(x)$ also in this situation.
By symmetry, the same argument applies when both 
$x_{l-1}$ and $x_{r+1}$ are located on the counterclockwise hemisphere.

In the second case we assume that $I$ is no boundary interval
and that 
$x_{l-1}$ and $x_{r+1}$ are located on different hemispheres.
Here we also include the case where one or both 
$x_{l-1}$ and $x_{r+1}$ are antipodal to $a.$
If only one neighbor is antipodal, we interpret it to lie on the opposite hemisphere
of the non-antipodal member.
If both neighbors are antipodal, we interpret them to lie on different hemispheres.
We let $\mathcal C$ be the arc connecting $x_{l-1}$ and $x_{r+1}$
which has $a$ as member.
Letting $a'$ equal any value on the arc $\mathcal C,$
leads to $TV(\bar x) \leq TV(x),$
meaning that it does not increase the variation penalty 
$TV(x) = \sum_n \alpha d_\T(x_n, x_{n+1})$.
By definition, the data term is minimized by letting $a'$ 
be a (weighted) median of $y_l, \ldots, y_r.$
A (weighted) median of the circle-valued data can be chosen 
as an element of the unique values
$\{y_l, \ldots, y_r\}$ unified with the antipodal points
$\{\tilde y_l, \ldots, \tilde y_r\}.$
We choose $a'$ as such a median. This implies
$T_{\alpha; y}(\bar x) \leq T_{\alpha; y}(x).$

It remains to consider the boundary intervals.
If $I = \{1,\ldots,N\},$
we proceed as in the second case
and set $\bar u_i =  a'$ for all $i,$ where $a'$ is a (weighted) median of $y$
which is contained in $V.$
Else, if either $1 \in I$ or $N \in I$ we proceed analogously to the first case
with the difference that we replace the decision criterion $W_1 > W_2 + 2\alpha$
employed there by $W_1 > W_2 + \alpha.$

We repeat the above procedure until 
$|\Ic| = 0$ which implies that the values of the final result $x'$
all lie in $V^N.$
Then plugging in a minimizer $x=x^\ast,$
results in a minimizer $x' \in V^N$  
which shows the theorem. 
\end{proof}

As for scalar data, the assertion of Theorem~\ref{thm:setOfMinimizersCirc}
is not true for quadratic data terms.
This can be seen using the previous example
interpreting the data $y = (0,1)$ as angles.

In order to illustrate the difference to the real-valued data case,
let us point out a degenerate situation which is due to the circular nature of the data. 
Assume that the data only  consists
of a point $z \in \T$ and its antipodal point $\tilde z,$ i.e., $y = (z, \tilde z).$
For sufficiently large $\alpha$, 
any minimizer $\hat x$ of \eqref{eq:tvCirc} is constant; 
say $\hat x = (a,a).$ 
Since the TV penalty gets equal to zero, 
$a$ must be equal to a median  of $y.$
It is not hard to check that 
every point on the sphere is a median of $y.$
This behavior appears curious at first glance.
However, 
the data shows no clear tendency towards a distinguished orientation.
Thus, every estimate can be considered as equally good.
The result seems even more
natural than that of $L^2$-TV regularization.
An $L^2$-TV minimizer would consists of
one of the two \enquote{mean orientations}
which are given by rotating $z$ by $\pi/2$
in clockwise or counterclockwise direction.
Both minimizers seem rather arbitrary,
and,  moreover, the two options point into opposing directions.

\section{Efficient algorithms for the reduced problems}\label{sec:dynProg}
Theorem~\ref{thm:setOfMinimizersScal} and Theorem~\ref{thm:setOfMinimizersCirc}
allow us to reduce the infinite search spaces $\R^N$ and $\T^N$ 
in \eqref{eq:l1tv} and \eqref{eq:tvCirc}, respectively,
to the finite sets $V^N,$ which are specified 
in these theorems.
Thus, it remains to solve the problems: find
$$
x^\ast \in  \argmin_{x \in V^N}  T_{\alpha; y}(x).
$$
This can be achieved with dynamic programming 
whose basic idea is to decompose the problem into a series of similar, simpler and tractable subproblems.
For an early account on dynamic programming, we refer to \cite{bellman1957dynamic}.

\subsection{The Viterbi algorithm for energy minimization on finite search spaces}
\label{subsec:GeneralViterbi}

We utilize a dynamic programming scheme
developed by Viterbi \cite{viterbi1967error}; see also \cite{forney1973viterbi}.
Related algorithms have been proposed in \cite{bellman1969curve,blake1987visual}.
In this paragraph, we review a special instance of the Viterbi algorithm
following the presentation of the 
survey  \cite{felzenszwalb2011dynamic}.

We aim at minimizing an energy functional of the form
\begin{equation}\label{eq:generalEnergy}
	 E(x_1,\ldots, x_N) = \alpha \sum_{n=1}^{N-1} d(x_n,x_{n+1}) + \sum_{n=1}^N w_n d(x_n, y_n)
\end{equation}
where the arguments $x_1,\ldots, x_N$ can take values in a finite set $V = \{ v_1, \ldots, v_K\}.$
The Viterbi algorithm solves this problem in  two steps:
tabulation of energies and reconstruction by backtracking.

For the tabulation step,
the starting point is the table $B^1 \in \R^K$ given by
\[
	B^1_k = w_1 d(v_k, y_1) \quad\text{for }k = 1,\ldots, K.	
\]
From now on, the symbol $K$ denotes the cardinality of $V.$  
For $n=2,\ldots,N$ we successively compute the tables $B^n \in \R^K$
which are 
given  
 by
\begin{equation}\label{eq:bellmanEquation}
	B^n_k = w_n d(v_k, y_n) + \min_{l} \{ B^{n-1}_l + \alpha\, d(v_k,v_l)\}, 
\end{equation}
for  $k = 1,\ldots,K.$
The entry $B^n_k$ represents the energy 
of a minimizer on data $(y_1,\ldots,y_n)$ whose endpoint is equal to  $v_k.$

For the backtracking step,
it is convenient to introduce an auxiliary tuple $l \in \N^N$
which stores minimizing indices.
We initialize the last entry of $l$ by
	$l_N = \argmin_{k} B^N_k.$
Then we successively compute the entries of $l$ for $n = N-1, N-2, \ldots,1$  by
\begin{equation}
	l_n = \argmin_{k} B^{n}_k
	 + \alpha\, d(v_k,v_{l_{n+1}}).
\end{equation}
Eventually, we reconstruct a minimizer $\hat x$ 
from the indices in $l$  by
\[
	\hat x_n = v_{l_n}, \quad\text{for }n = 1,\ldots, N. 
\]
The result $\hat x$ is a global 
minimizer of the energy \eqref{eq:generalEnergy}; see \cite{felzenszwalb2011dynamic}.
For a general functional, filling the table $B^n$ in \eqref{eq:bellmanEquation} costs  $\Oc(K^2).$  
This implies that the described procedure is in $\Oc(K^2 N).$ 
In the next subsections, we will derive procedures to reduce the
complexity for filling the tables $B^n$ for our concrete problem to $\Oc(K).$

\subsection{Distance transform on a non-uniform real-valued grid}
\label{subsec:DistTrafoReal}

We first consider the case of real-valued data.
The time critical part of the Viterbi algorithm is
the computation of the minima 
\begin{equation}\label{eq:distTransReal}
D_k = \min_{l} B_l + \alpha |v_k - v_l|, \quad\text{for all $k = 1, \ldots, K.$}
\end{equation}
This problem is known as 
distance transform with respect 
to the $\ell^1$ distance (weighted by $\alpha$).  
Felzenszwalb and Huttenlocher \cite{felzenszwalb2004distance, felzenszwalb2006efficient}
describe an efficient algorithm 
for \eqref{eq:distTransReal} 
when $V$ forms an integer grid, i.e., $V = \{ 0,\ldots, K-1\}.$
In our setup,  $V$ forms a non-uniform grid in general.
Therefore, we generalize their method accordingly.

In the following, we identify the elements of
$V$ with a $K$-dimensional vector $v$ which is 
ordered in ascendingly, i.e., $v_1< v_2 < \ldots < v_K.$ 
The sorting causes no problems since we can sort $v$ in $\Oc(K \log K),$
and since the logarithm of the number of values $K$ is smaller than the data 
length $N,$ we have $\Oc(K \log K) \subset \Oc(K N).$

As we will show below, the following two-pass procedure computes the real-valued distance transform $D$:  

\begin{algorithm}[H]
\caption{Real-valued distance transform {distTransReal($B$, $v$, $\alpha$)}.}
\label{alg:distTrans}
\small
\SetKwInOut{Input}{Input}
\SetKwInOut{Output}{Output}
\SetKwInOut{Local}{Local}
\SetCommentSty{text}
\SetCommentSty{itshape}
\KwIn{$B \in  \R^K;$ $v \in \R^K$ sorted in ascending order; $\alpha > 0;$
}
\KwOut{Distance transform $D$} 
\Begin{
	$D \leftarrow B$\; 
	\For{$k \leftarrow 2, 3, \ldots, K$}{ 
		$D_k \leftarrow \min ( 	D_{k-1}  + \alpha(v_k - v_{k-1}); D_k)$\;
	}
	\For{$k \leftarrow K-1, K-2, \ldots, 1$}{ 
		$D_k \leftarrow \min ( 	D_{k+1}  + \alpha(v_{k+1} - v_k); D_k)$\;
	}
	\textbf{return} $D$\;
}
\end{algorithm}

\noindent In order to show the correctness of the method, we build on 
the structurally related proof given in \cite{felzenszwalb2004distance}
for uniform grids. The major new idea is to pass from discrete 
to continuous infimal convolutions in order to deal with the nonequidistant grid.
The (continuously defined) infimal convolution of two functions $F$ and $G$ on $\R$
with values on the extended real line $[-\infty, \infty]$
is given by
\[
	F \square G(r) = \inf_{u \in \R} \{ F(u) + G(r-u)  \},
\]
see \cite[Section 5]{rockafellar1970convex}.
In the following, 
the infimum will be always attained, so that it actually is a minimum; we use this fact in the notation we employ. 

For real valued data, we get the following result accelerating the bottleneck operation in the general Viterbi algorithm from Section~\ref{subsec:GeneralViterbi}.
\begin{theorem}\label{thm:distTransReal}
	Algorithm~\ref{alg:distTrans} computes  \eqref{eq:distTransReal} in $O(K).$
\end{theorem}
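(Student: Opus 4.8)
The plan is to prove the two claims of the theorem separately: the running time comes essentially for free, while all the work lies in establishing correctness, which I would carry out through the continuous infimal convolution introduced above. For the complexity, observe that Algorithm~\ref{alg:distTrans} consists of the initialization $D \leftarrow B$ followed by two single sweeps over the index set $\{1,\ldots,K\}$, each performing one addition and one comparison per index. Hence the algorithm runs in $O(K)$ time, as claimed. (The sorting of $v$ is performed once, outside the routine, and was already accounted for in the preceding discussion.)

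For correctness, I would first reformulate \eqref{eq:distTransReal} as a continuous infimal convolution evaluated on the grid. Encode the table $B$ as the function $f\colon \R \to [-\infty,\infty]$ with $f(v_l) = B_l$ for $l = 1,\ldots,K$ and $f(r) = +\infty$ otherwise. Then for every $k$,
\[
D_k = \min_l \{ B_l + \alpha|v_k - v_l|\} = (f \square \alpha|\argdot|)(v_k),
\]
since points off the grid contribute $+\infty$. Next, split the symmetric cone into its two one-sided halves $c_+(r) = \alpha r$ for $r \geq 0$ (and $+\infty$ otherwise) and $c_-(r) = -\alpha r$ for $r \leq 0$ (and $+\infty$ otherwise). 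A short direct computation gives $c_+ \square c_- = \alpha|\argdot|$, so by associativity of the infimal convolution,
\[
D_k = (f \square c_+ \square c_-)(v_k) = \big((f \square c_+) \square c_-\big)(v_k).
\]

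The heart of the proof is then to identify the two passes of the algorithm with these two outer convolutions, restricted to the grid. I would show by an ascending induction on $k$ that after the first loop the array satisfies $D_k = (f \square c_+)(v_k) = \min_{l \leq k}\{B_l + \alpha(v_k - v_l)\}$; the induction step rests on the one-step identity $\min_{l \leq k}\{B_l + \alpha(v_k - v_l)\} = \min\big(B_k,\ \alpha(v_k - v_{k-1}) + \min_{l \leq k-1}\{B_l + \alpha(v_{k-1} - v_l)\}\big)$, which is exactly the forward update and in which the non-uniform gap $v_k - v_{k-1}$ enters. Symmetrically, a descending induction shows that the second loop replaces $h := f \square c_+$ by $(h \square c_-)(v_k) = \min_{l \geq k}\{h_l + \alpha(v_l - v_k)\}$, which by the displayed associativity equals $D_k$.

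The step I expect to be the main obstacle — and the only place where passing to the continuous picture really pays off — is justifying that evaluating the outer convolution $\square\, c_-$ using the grid-point values of $h$ alone loses nothing, i.e. that the continuous infimum over $u \in \R$ in $(h \square c_-)(v_k)$ is attained at a grid point $u = v_m$ with $m \geq k$. I would argue this from the structure of $h = f \square c_+$: on each interval $[v_m, v_{m+1})$ the set $\{l : v_l \leq u\}$ is constant, so $h$ is affine there with slope $\alpha$, whence $u \mapsto h(u) + \alpha(u - v_k)$ is affine with positive slope $2\alpha$ and is minimized at the left endpoint $v_m$. Hence the relevant minimizers fall on grid points, the infima are genuinely attained (as anticipated in the remark following the definition of $\square$), and the discrete recursions faithfully realize the continuous convolutions. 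Note that the corresponding issue does not arise for the forward pass, since $f$ is supported on the grid. Combining the two induction results with the associativity identity then yields $D_k = \min_l\{B_l + \alpha|v_k - v_l|\}$ for all $k$, completing the correctness proof.
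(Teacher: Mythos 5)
Your proof is correct, and at the structural level it follows the same route as the paper's: encode $B$ as a function that is $+\infty$ off the grid, write $D_k$ as an infimal convolution with $\alpha|\cdot|$, split the cone as $c_+ \square c_-$, invoke associativity, and identify the forward and backward passes with the two outer convolutions via one-step recursions on the (non-uniform) grid. The one place where you genuinely diverge is the justification that the \emph{second} (backward) convolution may be restricted to grid points. The paper disposes of this by asserting that $F' = F \square G_+$ satisfies $F'(r) = \infty$ for all $r \notin V$ --- which is in fact false: $F \square G_+$ is finite on all of $[v_1,\infty)$, since for any $r \geq v_1$ there is a grid point $v_j \leq r$ making $F(v_j) + G_+(r - v_j)$ finite. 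Your piecewise-affine argument --- that $f \square c_+$ has slope $\alpha$ on each inter-grid interval, so $u \mapsto (f \square c_+)(u) + \alpha(u - v_k)$ has slope $2\alpha > 0$ there and is minimized at the left endpoint, a grid point --- is precisely the justification needed to equate the continuous infimum with the discrete minimum computed by the backward sweep; and, as you note, no such argument is needed in the forward pass because $f$ itself is supported on the grid. So your write-up is not merely correct: it repairs a genuine inaccuracy in the paper's own proof, whose conclusion survives exactly because of the attainment argument you supply. Two cosmetic remarks: your $h_l$ should be read as $h(v_l)$ to keep the induction statement unambiguous, and you could note (as the paper does implicitly) that the intervals to consider in the backward pass are $[v_m, v_{m+1})$ for $m \geq k$ together with $[v_K,\infty)$, on which the same slope argument applies.
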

\begin{proof}
We define the function $F$ on $\R$ by
	$F(v_l) = B_l$ for $v_l \in V$ and by  $F(r) = \infty$ for $r \in\R\setminus V$.
Also define $G(u) = \alpha|u|.$
Then, $D_k$ can be formulated in terms of the infimal convolution
of $F$ and $G$ evaluated at $v_k,$ that is,
$$
	D_k = F \square G(v_k).
$$
In order to decompose $G,$ we define 
\[
	G_+(r) = 
	\begin{cases}
		\alpha r, &\text{for }r \geq 0,\\
		\infty, &\text{otherwise,}
	\end{cases}
\quad \text{and} \quad
	G_-(r) = 
	\begin{cases}
		-\alpha r, &\text{for }r \leq 0,\\
		\infty, &\text{otherwise}.
	\end{cases}
\]
We see that $G$ is the infimal convolution of $G_+$ and $G_-$ by using that 
\[
G_+ \square G_-(r) = \min_{t \in \R} { G_+(t) + G_-(r-t)  } = \alpha|r| = G(r).
\]
By the associativity of the infimal convolution (see \cite[Section 5]{rockafellar1970convex}),
we obtain
\begin{equation}\label{eq:assocInfConv}
	F \square G = F \square (G_+ \square G_-) = (F \square G_+) \square G_-.
\end{equation}
We use the right-hand representation; for the right-hand term in brackets,
we get, for $v_k \in V,$  
\begin{align*}
	F \square G_+(v_k) 
	&= \min_{j} { F(v_j) + G_+(v_k-v_j)  } \\
	&= \min_{j \leq k} { F(v_j) + \alpha(v_k-v_j)  } \\
	&=  \min \{ \min_{j \leq k-1}  F(v_j) + \alpha(v_k-v_j)  ; F(v_k)\} \\
		&=  \min \{ \min_{j \leq k-1}  F(v_j) + \alpha(v_{k-1} -v_j + v_k - v_{k-1})  ; F(v_k)\} \\
	&=  \min \{ 	F \square G_+(v_{k-1})  + \alpha(v_k - v_{k-1})  ; F(v_k)\}.
\end{align*}
Now, we denote the result by $F' = F \square G_+$ and continue to manipulate 
the right-hand term of \eqref{eq:assocInfConv} noticing that,
for all $r \notin V,$ we have $F'(r) = \infty.$ 
We obtain 
\begin{align*}
	F' \square G_-(v_k) 
	=& \min_{j} { F'(v_j) + G_-(v_k-v_j)  } \\
	=& \min_{j \geq k} { F'(v_j) - \alpha(v_k-v_j)  } \\
	=&  \min \{ \min_{j \geq k+1}  F'(v_j) - \alpha(v_k-v_j)  ; F'(v_k)\} \\
		=&  \min \{ \min_{j \geq k+1}  F'(v_j) - \alpha(v_{k+1} -v_j + v_k - v_{k+1})  ; F'(v_k) \} \\
	=&  \min \{ 	F \square G_-(v_{k+1})  + \alpha(v_{k-1} - v_k)  ; F'(v_k)\}.
\end{align*}
The above recursive equations show that the forward pass and the backward
pass of Algorithm~\ref{alg:distTrans} compute the desired infimal convolutions.
\end{proof}

\subsection{Distance transform on a non-uniform circle-valued grid}
Now we look at the circular case.
In this case, the corresponding $\ell^1$ distance transform is given by 
\begin{equation}\label{eq:distTransCirc}
D_k = \min_{l} B_l + \alpha d_\T(v_k, v_l), \quad\text{for all $k = 1, \ldots, K.$}
\end{equation}
Our task is to compute the distance transform in the circle case as well.
To this end, we 
employ the angular representation of values on the circle 
in the interval 
 $(-\pi, \pi].$
 As in the real-valued case, we identify 
 the elements of $V$ with a $K$-tuple $v$ which is sorted in ascending order.
In order to compute \eqref{eq:distTransCirc}, we use the following algorithm:

\begin{algorithm}[H]
\caption{Circle-valued distance transform {distTransCirc($B$, $v$, $\alpha$)}.}
\label{alg:distTransCirc}
\small
\SetKwInOut{Input}{Input}
\SetKwInOut{Output}{Output}
\SetKwInOut{Local}{Local}
\SetCommentSty{text}
\SetCommentSty{itshape}
\KwIn{$B \in  \R^K;$ $v \in (-\pi, \pi]^K$ sorted in ascending order; $\alpha > 0;$
}
\KwOut{Distance transform $D$} 
\Begin{
	$B' \leftarrow (B_1, \ldots, B_K, B_1, \ldots, B_K, B_1, \ldots, B_K)$\; 
	$v' \leftarrow (v_1 - 2\pi, \ldots, v_K- 2\pi, v_1, \ldots, v_K, v_1+  2\pi, \ldots, v_K + 2\pi)$\; 
	$D' \leftarrow \mathrm{distTransReal}(B', v', \alpha)$\;
	$D \leftarrow (D'_{K+1}, \ldots, D'_{2K})$\; 
	\textbf{return} $D$\;
}
\end{algorithm}
\noindent We point out that this algorithm employs the real-valued distance transform of Section~\ref{subsec:DistTrafoReal}. The next result in particular shows that 	Algorithm~\ref{alg:distTransCirc} actually computes a minimizer of the distance transform \eqref{eq:distTransCirc}. The proof uses infimal convolutions on the real line
and employs the corresponding statement Theorem~\ref{thm:distTransReal} for real-valued data.
\begin{theorem}\label{thm:distTransCircle}
	Algorithm~\ref{alg:distTransCirc} computes  \eqref{eq:distTransCirc} in $O(K).$
\end{theorem}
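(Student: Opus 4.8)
The plan is to reduce the circular distance transform \eqref{eq:distTransCirc} to the real-valued one of Section~\ref{subsec:DistTrafoReal} by \emph{unwrapping} the circle onto three adjacent copies of the fundamental domain; this is exactly what the tripling of $B$ and $v$ in Algorithm~\ref{alg:distTransCirc} accomplishes, so the whole result will follow from Theorem~\ref{thm:distTransReal} once the unwrapping identity is in place.

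First I would record the elementary identity that expresses the arc length distance through a finite minimum of Euclidean distances between lifted representatives. Since all entries of $v$ lie in the fundamental domain $(-\pi,\pi]$, the difference $v_k - v_l$ lies in $(-2\pi,2\pi)$, so the shortest arc is always realized by one of the three lifts $v_l-2\pi,\ v_l,\ v_l+2\pi$, giving
\[
	d_\T(v_k,v_l) = \min\{\,|v_k-(v_l-2\pi)|,\ |v_k-v_l|,\ |v_k-(v_l+2\pi)|\,\}.
\]
I would verify this by the three cases $v_k-v_l\in[-\pi,\pi]$, $v_k-v_l\in(\pi,2\pi)$, and $v_k-v_l\in(-2\pi,-\pi)$: in each case exactly one lift yields a distance $\le\pi$, which is the arc length, while the other two lifts are at least $\pi$ and hence do not affect the minimum.

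Next I would identify the algorithm's output with $D_k$. By definition, the real-valued transform applied to $(B',v',\alpha)$ returns $D'_j = \min_{1\le i\le 3K} B'_i + \alpha\,|v'_j - v'_i|$. Evaluating at the middle-block index $j=K+k$, where $v'_{K+k}=v_k$, and splitting the minimization over $i$ into the three blocks, the repetition $B'=(B,B,B)$ lets me factor $B_l$ out of the inner minimum over the three lifts:
\begin{align*}
	D'_{K+k} &= \min_{1\le l\le K} B_l + \alpha \min\{\,|v_k-(v_l-2\pi)|,\ |v_k-v_l|,\ |v_k-(v_l+2\pi)|\,\} \\
	&= \min_{1\le l\le K} B_l + \alpha\, d_\T(v_k,v_l) = D_k,
\end{align*}
using the identity from the first step. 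Hence the extracted block $(D'_{K+1},\ldots,D'_{2K})$ is precisely the circular distance transform.

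Finally I would check the precondition of Theorem~\ref{thm:distTransReal} and count operations. The lifted grid $v'$ must be sorted ascendingly so that no re-sorting is incurred: within each block this is inherited from $v$, and at the two block boundaries one has $v_K-2\pi\le-\pi<v_1$ and $v_K\le\pi<v_1+2\pi$, using $v_1>-\pi$ and $v_K\le\pi$. Therefore Theorem~\ref{thm:distTransReal} applies directly to the length-$3K$ input and computes $D'$ in $O(3K)=O(K)$, while the triplication and the final extraction are $O(K)$ as well. The only genuinely delicate point is the unwrapping identity of the first step, namely that restricting to the three shifts $\{-2\pi,0,+2\pi\}$ suffices to realize the geodesic distance; this relies entirely on confining $v$ to $(-\pi,\pi]$, and I would take care that the boundary values $\pm\pi$ are covered by the case analysis above.
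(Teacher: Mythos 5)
Your proof is correct and takes essentially the same route as the paper's: the same three-lift identity for $d_\T$ on $(-\pi,\pi]$, the same tripled table and grid, and the same reduction to Theorem~\ref{thm:distTransReal} with $O(3K)=O(K)$ bookkeeping. The only difference is presentational---you carry out the block-splitting as a direct finite minimization, while the paper packages the same computation as an infimal convolution $F' \square G$---and your explicit verification that $v'$ is sorted ascendingly (a precondition the paper merely asserts) is a welcome extra.
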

\begin{proof}

First we observe that the arc length distance on $\mathbb{S}^1 = \mathbb T$
can be written using the absolute value on $(-\pi,\pi]$ by
\[
	d_\T(u, w) = \min \{ |u - 2\pi - w|; |u - w|; |u + 2\pi - w|\},
\]
for $u, w \in (-\pi, \pi].$ 
We define the extended real-valued functions $F,F'$ defined on $\mathbb R$ as follows:
we let $F(v_k) = B_k$ on the points $v_k$ and $F(r) = \infty$ for $r \in \R \setminus V$;
to define $F',$ we let 
\[
F'(t) = \min\left\{F(t-2\pi), F(t),F(t+2\pi)\right\}.
\]
Our goal is to show that $D_k$ is the infimal convolution of $F'$ and  $G$
with $G$ given by $G(v) = \alpha |v|.$ 
We get that
 \begin{align*}
  D_k =&  \min_{r \in \R} \{ F(r) + 
  \alpha\, \min \{ |r - 2\pi - v_k|; 
   |r - v_k|; |r + 2\pi - v_k|\} \} \\
    =&\min_{r\in \R} \min \{ F(r) +  \alpha |r - 2\pi - v_k|; \\
    &\qquad  F(r) +  \alpha  |r - v_k|; F(r) +  \alpha  |r + 2\pi - v_k|\} \} \\
    =& \min \{\min_{r \in \R} F(r + 2\pi) +  \alpha |r  - v_k|; \\
    & \qquad \min_{r \in \R} F(r) +  \alpha  |r - v_k|; \min_{r \in \R}F(r- 2\pi) +  \alpha  |r  - v_k|\} \} \\
    =&\min_{r \in \R} F'(r) +  \alpha  |r - v_k| = F' \square G(v_k).
    \end{align*}
Hence, $D_k$ is the infimal convolution of $F'$ and  $G.$
We now shift the vector of assumed values $v$ by $-2\pi$ and $2\pi$ and 
consider the concatenation with $v$ to obtain $v'$ which is given by  
 \[v' = (v_1 - 2\pi, \ldots ,v_K - 2\pi, v_1, \ldots ,v_K, v_1 + 2\pi, \ldots ,v_K + 2\pi).\]
We note that $v'$ is ordered ascendingly. 
We let
   \begin{equation}\label{eq:dl}
 D'_l = F' \square G(v'_l), \quad \text{for } l = 1,\ldots 3K.
  \end{equation}
  By Theorem \ref{thm:distTransReal}, 
  we can compute \eqref{eq:dl} in $\Oc(K)$
  using Algorithm~\ref{alg:distTrans}. 
Eventually, we observe that
 \begin{equation*}
  D_k = F' \square G(v_k) = D'_{K+k}, \quad \text{for } k = 1,\ldots, K,
 \end{equation*}
which completes the proof.
\end{proof}

\subsection{Complete algorithm}
The complete procedure is described 
in Algorithm~\ref{alg:l1tv}.
Summarizing, we have obtained the following result:
\begin{theorem}\label{thm:complexityGeneral}
Let $y \in \R^N$ and $V = \val(y),$
or $y \in \T^N$ and $V = \val(y) \cup \val(\tilde y).$
Further let $K$ be the number of elements in $V.$ 
Then Algorithm~\ref{alg:l1tv} 
computes a global minimizer of the
 $L^1$-TV problem with real-valued \eqref{eq:l1tv}
 or  circle-valued data \eqref{eq:tvCirc}
 in $\Oc(KN).$
 In particular, if data is quantized to a finite set,
 then the algorithms for real-valued or circle-valued signals are in $\Oc(N).$
\end{theorem}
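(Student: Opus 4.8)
The plan is to assemble the statement from the building blocks established in the previous sections, treating correctness and complexity separately; the theorem is essentially a bookkeeping synthesis rather than a source of new difficulty. First I would fix one of the two data settings (real- or circle-valued) and recall that Algorithm~\ref{alg:l1tv} consists of three stages: the construction and sorting of the finite value set $V$, the Viterbi tabulation of the energies $B^n$ via the Bellman recursion \eqref{eq:bellmanEquation}, and the backtracking that reconstructs the minimizer.

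For correctness, I would argue as follows. By Theorem~\ref{thm:setOfMinimizersScal} in the real case, and by Theorem~\ref{thm:setOfMinimizersCirc} in the circle case, the minimum of $T_{\alpha;y}$ over the infinite search space ($\R^N$ or $\T^N$) coincides with its minimum over the finite set $V^N$ with the prescribed $V$. Since the energy \eqref{eq:generalEnergy} is exactly $T_{\alpha;y}$ restricted to $V^N$, and since the Viterbi scheme of Section~\ref{subsec:GeneralViterbi} returns a global minimizer of \eqref{eq:generalEnergy} over $V^N$, composing the two facts shows that the output $\hat x$ is a global minimizer of \eqref{eq:l1tv} or \eqref{eq:tvCirc}.

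For the complexity, the key observation is that the only term in the Bellman recursion \eqref{eq:bellmanEquation} whose naive evaluation costs more than $\Oc(K)$ is the inner minimization $\min_l \{B^{n-1}_l + \alpha\, d(v_k, v_l)\}$; this is precisely the distance transform \eqref{eq:distTransReal} in the real case and \eqref{eq:distTransCirc} in the circle case, with input $B = B^{n-1}$. By Theorem~\ref{thm:distTransReal} (Algorithm~\ref{alg:distTrans}) and Theorem~\ref{thm:distTransCircle} (Algorithm~\ref{alg:distTransCirc}), each of these transforms can be computed in $\Oc(K)$ rather than $\Oc(K^2)$. Adding the $\Oc(K)$ cost of the data term $w_n d(v_k, y_n)$ over all $k$, each table $B^n$ is filled in $\Oc(K)$, so the tabulation over the $N$ stages costs $\Oc(KN)$; the backtracking pass costs at most $\Oc(K)$ per stage and hence $\Oc(KN)$ as well (or $\Oc(N)$ if the minimizing indices are cached during tabulation). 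The remaining overhead --- constructing $V$ (together with the antipodal values $\val(\tilde y)$ in the circle case) and sorting it --- is $\Oc(K \log K)$, and since $K \leq 2N$ we have $\log K \leq N$, so this term is absorbed into $\Oc(KN)$. This yields the overall bound.

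Finally, the quantization statement follows immediately: if the data take values in a fixed finite alphabet of $L$ levels, then $K \leq L$ (respectively $K \leq 2L$ in the circle case) is a constant independent of $N$, so $\Oc(KN) = \Oc(N)$. The only point that requires genuine care, rather than routine citation, is the identification of the inner Bellman minimization with the distance transforms and the verification that every auxiliary step --- set construction, sorting, and backtracking --- stays within $\Oc(KN)$; the substantive analytic work has already been carried out in the correctness proofs of the reduction and distance-transform theorems.
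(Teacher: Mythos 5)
Your proposal is correct and follows essentially the same route as the paper, which presents Theorem~\ref{thm:complexityGeneral} precisely as a summary assembled from Theorems~\ref{thm:setOfMinimizersScal} and \ref{thm:setOfMinimizersCirc} (search-space reduction), the correctness of the Viterbi scheme on $V^N$, and the $\Oc(K)$ distance transforms of Theorems~\ref{thm:distTransReal} and \ref{thm:distTransCircle}, with the same $\Oc(K\log K)\subset\Oc(KN)$ absorption of the sorting cost. Your identification of the inner Bellman minimization with the distance transforms, and the bookkeeping for tabulation, backtracking, and the quantized-data case, match the paper's intended argument.
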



\begin{algorithm}[t]
\caption{Exact algorithm for the $L^1$-TV problem of real- or circle-valued signals}
\label{alg:l1tv}
\small
\SetKwInOut{Input}{Input}
\SetKwInOut{Output}{Output}
\SetKwInOut{Local}{Local}
\SetCommentSty{text}
\SetCommentSty{itshape}
\KwIn{Data $y \in  \R^N$ or $y \in  \T^N;$
regularization parameter $\alpha > 0$;  
weights $w \in (\R_0^+)^N$;
}
\KwOut{Global minimizer $\hat x$ of \eqref{eq:l1tv} or \eqref{eq:tvCirc};} 
\Begin{
\tcc{1. Init candidate values}
$V \leftarrow \val(y)$\tcc*{Real-valued case}
$V \leftarrow \val(y) \cup \val(\tilde y)$ \tcc*{Circle-valued case}
$v \leftarrow $ $K$-tuple of elements of $V,$ sorted ascendingly\;
\tcc{2. Tabulation}
\For{$k \leftarrow 1$ \KwTo $K$}{ 
		$B^1_k \leftarrow  w_1 d(v_k,y_1)$\; 
}
\For{$n \leftarrow 2$ \KwTo $N$}{ 
	$D \leftarrow \mathrm{distTransReal}(B^n, v, \alpha)$\tcc*{Real-valued case}
	$D \leftarrow \mathrm{distTransCirc}(B^n, v, \alpha)$\tcc*{Circle-valued case}
	\For{$k \leftarrow 1$ \KwTo $K$}{ 
		$B^n_k \leftarrow  w_n d(v_k,y_n) + D_k$\; 
	}
}
	
\tcc{3. Backtracking}
$l\leftarrow \argmin_{k=1,...,K} B^N_k$\;
$\hat x_n \leftarrow v_{l}$\;

\For{$n \leftarrow N-1, N-2,..., 1$}{ 

	    $l \leftarrow \argmin_{k=1,...,K}  B_k^n + \alpha\, d(v_k, \hat x_{n+1})$\;
	    $\hat x_n \leftarrow v_{l}$\;
	    }
\textbf{return} $\hat x$\;
}
\end{algorithm}

\section{Numerical results}\label{sec:numerics}

We illustrate the effects of $L^1$-TV
minimization for real and circle-valued data.
We consider both synthetic and real life data.

\begin{figure}
	\def\figfolderA{experiments/mapEstimation/}
		\def\figurewidth{0.47\columnwidth}
	\centering	
\centering
\includegraphics[width=\figurewidth]{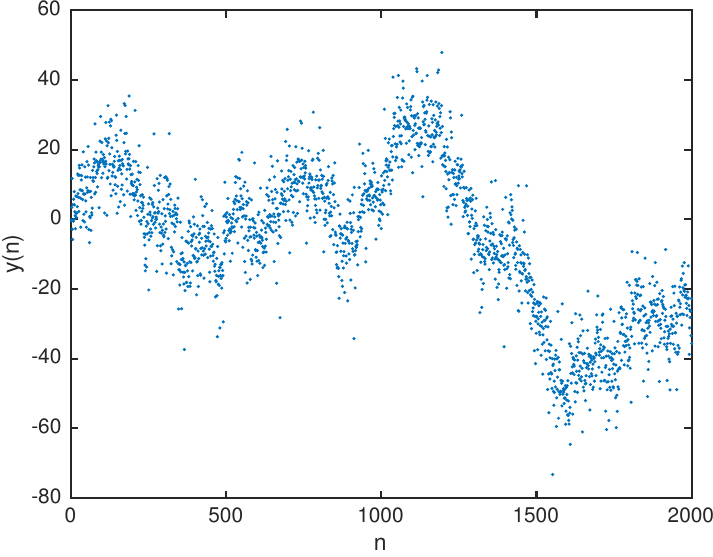} 
\hfill
\includegraphics[width=\figurewidth]{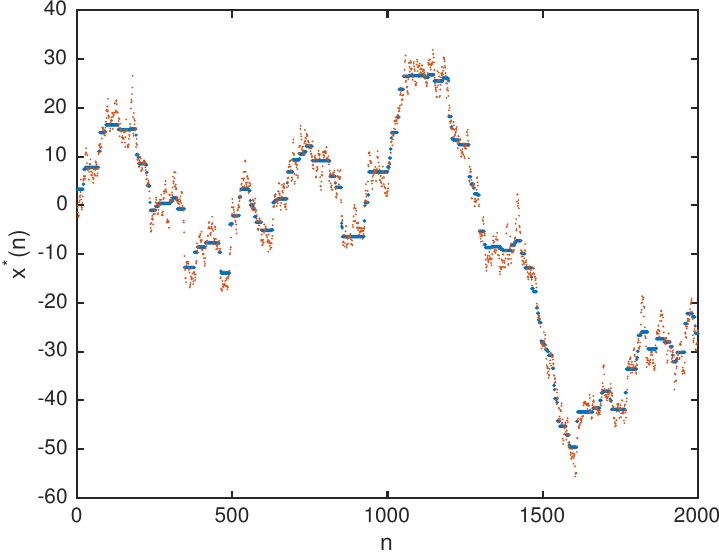} \\[2ex]

	\def\figfolderB{experiments/mapEstimationCirc/}
	\centering	
\centering
\includegraphics[width=\figurewidth]{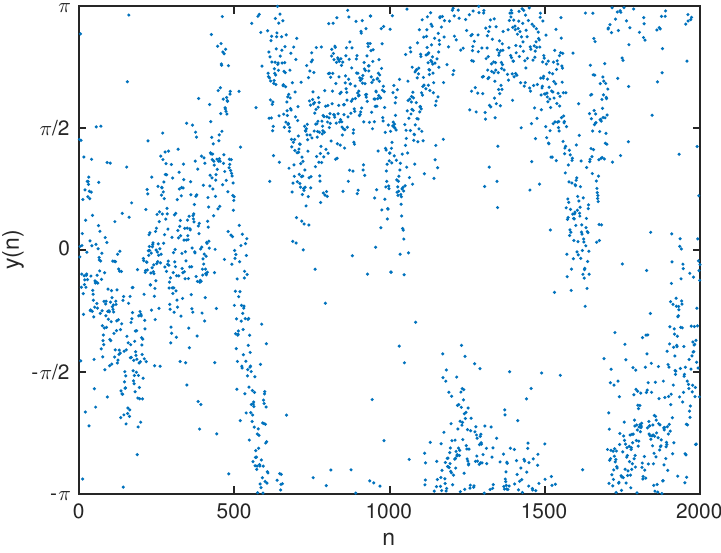} 
\hfill
\includegraphics[width=\figurewidth]{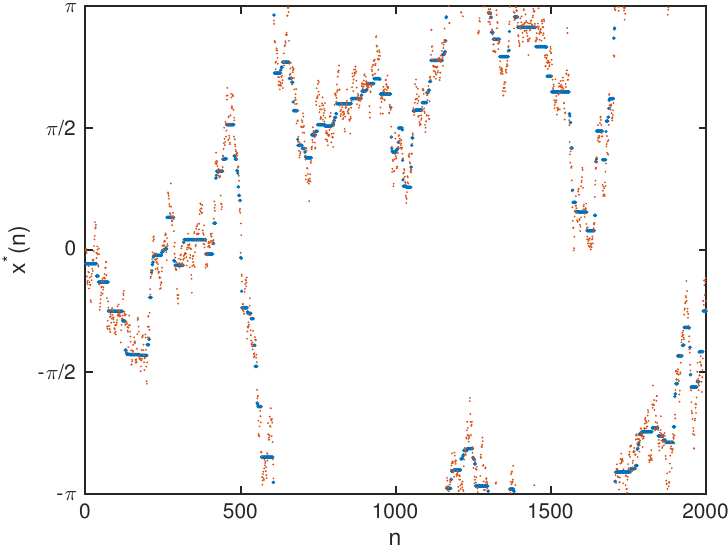} 

	\caption{\emph{Top left:} Realization of a Levy process with Laplacian increments ($\rho = \protect\input{\figfolderA eta.txt}$), corrupted with Laplacian white noise ($\xi = \xi_1 =  \ldots = \xi_N = \protect\input{\figfolderA xi.txt}$).
	\emph{Top right:} 
	The minimizer of the $L^1$-TV functional with parameter $\alpha = \xi/\rho$ is the maximum a posteriori estimate ($\deltaSNR$: $\protect\input{\figfolderA deltaSNR.txt},$ ground truth displayed as small red points).
	\emph{Bottom:} Analogous experiment for circle-valued data with $\rho = \protect\input{\figfolderB eta.txt},$ $\xi = \protect\input{\figfolderB xi.txt}$ 
	and $\alpha = \protect\input{\figfolderB alpha.txt}$
	($\deltaSNR$:~$\protect\input{\figfolderB deltaSNR.txt}$).
	}
\label{fig:synthReal}
\end{figure}

\paragraph{Experimental setup.}

We have implemented our algorithms in Matlab.
The experiments were conducted on a desktop computer 
with $3.5$ GHz Intel Xeon E5 and 32 GB memory.
The weight vector $w$ can be employed to account for 
non-equidistant sampling; it means that the data 
$y_n = f(t_n)$ is the sampling of a (continuously defined) signal $f$
at non-equidistant knots $t_1 < t_2 < \ldots < t_N.$
A reasonable choice for $w_n$ is the average distance of the sampling point $t_n$ to its nearest neighbors $t_{n-1}, t_{n+1},$
i.e., $w_n = (t_n - t_{n-1} + t_{n+1} - t_n)/2 = (t_{n+1} - t_{n-1})/2.$
In our experiments, we focus on equidistant sampling so that we have $w_n = 1$ for all $n = 1, \ldots, N.$
To quantify the denoising performance, we occasionally give
the manifold-valued version of the \emph{signal-to-noise ratio improvement} (see \cite[Chapter~10]{unser2014introduction} and \cite{WDS2013}).
It is given by
\[
	\deltaSNR = 10 \log_{10} \left(  \frac{\sum_{n} d(\bar{y}_{n}, y_{n})^2   }{\sum_{n} d(\bar{y}_{n}, x^*_{n})^2}\right),
\]
where $\bar y$ denotes the ground truth.
For real-valued data, we let $d$  denote the Euclidean metric.
If not mentioned explicitly, the regularization parameter $\alpha$ is adjusted empirically.
The higher we choose the value $\alpha$ the stronger we smooth the signal.

\paragraph{Circular $L^1$-TV on synthetic data.} 
In the introductory experiment (Figure~\ref{fig:synthCirc}), 
we have computed the total variation minimizer 
for a circle sample signal with known ground truth $\bar y$.
The signal $y$ was created by corrupting the phase angle $\bar \phi$
of the  original signal by Laplacian distributed white noise
of standard deviation $\sigma = 0.5.$
That is, the signal $y$ is given by  $y_j = e^{\mathrm{i}(\bar\phi_j + \eta_j)}$ where $\eta$ denotes 
the noise vector. 
The experiment illustrates
the denoising capabilities of total variation 
minimization for circle-valued data.
In particular, we observe that the phase jumps by $2\pi$ are taken 
into account properly.
The runtime was $\protect3.5$ seconds.

\paragraph{$L^1$-TV as MAP estimator.}
Under certain assumptions 
on the signal and the noise,
 the Bayesian framework gives a suggestion for the parameter $\alpha.$
 For an introduction to the related statistical concepts we exemplarily refer to
 the book \cite{unser2014introduction}.
 
Assume that the true signal $\bar{y} \in \R^N$  (or $\bar{y} \in \T^N$)
 is generated according a Levy process with Laplacian increments; that is, $\bar{y}$ is a random vector
and the increments 
follow a distribution with density $P(\bar{y}_{n}| \bar{y}_{n-1}) \sim  e^{- d(\bar{y}_n, \bar{y}_{n-1})/\rho}.$
Also assume that the noise is distributed 
according to $P(y_n| \bar{y}_{n}) \sim e^{-d(y_n, \bar{y}_{n})/\xi_n}.$
Here, $\rho$ and $\xi_n $ are positive parameters.
The maximum a posteriori (MAP) estimator is
given by
\[
\begin{split}
	x^*_{MAP} &= \arg \max_{x} P(y|x) = \arg \max_{x} P(x)P(x|y) \\
	&= \arg \max_{x} \, \prod_{n=1}^{N-1} e^{-d(x_n, x_{n+1})/ \rho} \prod_{n=1}^N e^{-d(x_n, y_{n})/\xi_n } \\
	&= \arg \min_{x} \frac{1}{\rho} \sum_{n=1}^{N-1}   d(x_n, x_{n+1}) +  \sum_{n=1}^N \frac{1}{\xi_n} d(x_n, y_{n} ),
\end{split}
\]
The last equality has been obtained by taking the logarithm.
This derivation reveals that $L^1$-TV model is the MAP estimator 
in the  above probabilistic setup.
Therefore, the natural parameter choice is $\alpha = 1/\rho$ and $w_n = 1/\xi_n$ for $n = 1, \ldots, N.$
In particular, for uniform parameters $\xi=\xi_1 = \ldots = \xi_N,$ we have $\alpha = \xi/\rho.$
Figure \ref{fig:synthReal} shows the realization of such signals 
and their MAP estimates.

\paragraph{Robustness to impulsive noise.}

Besides the above Laplacian noise and innovation models,
the (real-valued) $L^1$-TV estimator
is known for its robustness to impulsive noise 
and for its good performance 
on piecewise constant signals; see, e.g., \cite{fu2006efficient,clason2009duality,dumbgen2009extensions,kolmogorov2015total}.
In the following,
we illustrate this observation,
and reveal a similarly good 
performance for circle  valued data with impulsive noise imposed.
We create a compound Poisson distributed random vector $s \in \R^N;$ 
that is, $s_n= 0$ with probability $e^{-\lambda}$ 
and $s_n$ is uniformly distributed in $[-a, a]$ with probability 
$1 - e^{-\lambda};$ see \cite{unser2014introduction}.
(Here, we use $a = 4.$)
The (true) signal $\bar y$ is given 
as the summation process of the increments $s;$ 
that is,  $\bar y_n = \sum_{j=1}^n s_j.$
Then, we corrupt the signal by impulsive noise 
which is also distributed according 
to a compound distribution with $\lambda'$
and $a' = \max_n |\bar y_n|.$
This means that $y_n = \bar y$ with probability $e^{-\lambda'}$
as well as that $y_n$ is uniformly distributed in $[-a,a]$ with probability $1- e^{-\lambda'}.$
In case of circle valued data,
we create a random signal in the same way with $a = a' = \pi.$
Then we consider the corresponding signal as phase angle.
Figure \ref{fig:synthRealImpulse} shows the 
realization of such stochastic processes and their $L^1$-TV estimates.

\begin{figure}
	\def\figfolderA{experiments/impulseNoise/}
		\def\figurewidth{0.47\columnwidth}
	\centering	
\centering
\includegraphics[width=\figurewidth]{\figfolderA expTVRealImpulseData.eps} 
\hfill
\includegraphics[width=\figurewidth]{\figfolderA expTVRealImpulseResult.eps} 
\\[2ex]
	\def\figfolderB{experiments/impulseNoiseCirc/}
		\centering	
\centering
\includegraphics[width=\figurewidth]{\figfolderB expTVCircImpulseData.eps} 
\hfill
\includegraphics[width=\figurewidth]{\figfolderB expTVCircImpulseResult.eps} 

	\caption{\emph{Top left:} Realization of a (real-valued) Levy process with compound Poisson distributed increments ($\lambda = \protect\input{\figfolderA innoLambda.txt}$), corrupted with impulsive noise with compound Poisson distribution
		($\lambda' = \protect\input{\figfolderA noiseLambda.txt}$).
	\emph{Top~right:} The minimizer of the $L^1$-TV functional with parameter $\alpha = \protect\input{\figfolderA alpha.txt}$ ($\deltaSNR$: $\protect\input{\figfolderA deltaSNR.txt}$).
	\emph{Bottom:} Analogous experiment for circle-valued data with 	$\lambda = \protect\input{\figfolderB innoLambda.txt},$ $\lambda' = \protect\input{\figfolderB noiseLambda.txt}$ 
	and $\alpha = \protect\input{\figfolderB alpha.txt}$ ($\deltaSNR$:  $\protect\input{\figfolderB deltaSNR.txt}$).
	}
\label{fig:synthRealImpulse}
\end{figure}

\paragraph{Real life data -- Estimation of wind orientations.}
Next, we apply our algorithm 
to real life data.
The first data set consists of
wind directions
at the station WPOW1 (West Point, WA)
recorded every $10$ minutes in the year 2014.
The second data set consists of wind directions at the station VENF1 (Venice, FL) recorded every $60$ minutes in the same year.\footnote{Data available at \url{http://www.ndbc.noaa.gov/historical_data.shtml}.}
The data is given quantized to integer angles in degrees,
thus $K= 360.$
The regularized signal  
facilitates to identify the time intervals of approximately constant wind
direction.
For the estimate of the first data set (Figure~\ref{fig:wind}),
we observe a relatively regular and sudden alternation of the wind orientation
between around $0.5$ and $2.9$ radians every third to fifth day.
For the estimate of the second data set (Figure~\ref{fig:wind2}),
we observe an inclination towards the orientation angle $0.9$ radians
in the middle of the year, and to $0.8$ radians in the months of autumn.
Despite the lengths of the signals ($N=52543$ and $N = 8755$), 
the computational times amount to only around $20$ seconds and around $3$ seconds, respectively.

\begin{figure*}
\centering
	\def\figfolder{experiments/wind/}
		\def\figurewidth{1\textwidth}
\def\plottitle{}
\includegraphics[width=\figurewidth]{\figfolder expTVCircWindData.eps} \\[4ex]
\includegraphics[width=\figurewidth]{\figfolder expTVCircWindResult.eps}\\[2ex] 
	\caption{\emph{Top:} Wind directions 
	at Station WPOW1 (West Point, WA)
	recorded every 10 minutes in the year 2014.
	\emph{Bottom:} Total variation regularization with parameter
	$\alpha = \protect\input{\figfolder alpha.txt}.$ 
	The data is given quantized to $K = 360$ angles. 
 The time computation amounts to only $\protect\input{\figfolder runtime.txt}$ seconds 
	for the signal of length $N = \protect\input{\figfolder numel.txt}.$
	}
\label{fig:wind}
\end{figure*}

\begin{figure*}
\centering
	\def\figfolder{experiments/wind2/}
		\def\figurewidth{1\textwidth}
\def\plottitle{}
\includegraphics[width=\figurewidth]{\figfolder expTVCircWindData.eps} \\[4ex]
\includegraphics[width=\figurewidth]{\figfolder expTVCircWindResult.eps}\\[2ex] 
	\caption{\emph{Top:} Wind directions at Station  VENF1 (Venice, FL)
	recorded every 60 minutes in the year 2014.
	\emph{Bottom:} Total variation regularization with parameter
	$\alpha = \protect\input{\figfolder alpha.txt}$ 
 ($N = \protect\input{\figfolder numel.txt},$ CPU time: $\protect\input{\figfolder runtime.txt}$ seconds).}
\label{fig:wind2}
\end{figure*}

\section{Discussion}

We have derived exact algorithms
for the $L^1$-TV problem with scalar
and circle-valued data. 
A first crucial point was the reduction 
of the search space to a finite set 
which allowed us 
to  employ the Viterbi algorithm.
The second key ingredient was a reduction
of the computational complexity based on 
a generalization of distance transforms.
The algorithms have quadratic  complexity in the  worst case.
The  complexity is linear
when the signal is quantized to a finite set.
We note that quantized signals 
appear frequently in practice,
for example in digitalized audio signals and images,
or when angular data is given in integer degrees 
as in the considered time series of wind directions.

The circular version is
the first exact solver for TV regularization of circle-valued signals.
Besides the application for jump-preserving denoising 
of angular signals,
it can also be used as building block 
for higher dimensional problems as in \cite{weinmann2014mumford}
or as benchmark for iterative strategies, e.g., for those of \cite{CS13,WDS2013,grohs2014total}.

Next we discuss the differences 
to previously proposed exact solvers for the real-valued case.
The solver of Dümbgen and Kovac \cite{dumbgen2009extensions}
is based 
on a generalization of the taut string algorithm combining
isotonic and antitonic regression functions
which is quite distinct from our approach.
The recent paper of Kolmogorov et al.~\cite{kolmogorov2015total}
seems at first glance to be related to our method
 because it also utilizes dynamic programming.
However, the strategy is fundamentally different: in \cite{kolmogorov2015total}, the algorithm is based on dynamically 
removing and appending breakpoints, 
whereas our method performs an efficient scanning  over the elements of 
the finite search space $V^N.$
The solvers of \cite{dumbgen2009extensions} and \cite{kolmogorov2015total}
have complexity  $\Oc(N \log N )$ and $\Oc(N \log \log N ),$ respectively.
Our method is competitive in terms 
of algorithmic complexity when 
the data is quantized which leads to the com\-plexity~$\Oc(N).$

The proposed approach 
appears to be unique for $L^1$ data terms.
In particular, we have provided counterexamples 
that the  utilized 
search space reduction
is not valid for quadratic data terms.
An exact and efficient algorithm 
for $L^2$-TV regularization of circle-valued signals
remains as an open question.

\section*{Acknowledgement}
Martin Storath and Michael Unser 
are supported by  the European Research Council under the European Union's Seventh Framework Programme (FP7/2007-2013) / ERC grant agreement no.~267439.
Andreas Weinmann is supported by the Helmholtz Association within the young investigator group VH-NG-526. 
Martin Storath and Andreas Weinmann  acknowledge the support by the DFG scientific network Mathematical Methods in Magnetic Particle Imaging.

{\small
\bibliographystyle{myplainnat}
\bibliography{l1tv}
}

\end{document}